\newtheorem{theorem}{Theorem}[section]
\newtheorem{lemma}[theorem]{Lemma}
\newtheorem{proposition}[theorem]{Proposition}
\newtheorem{corollary}[theorem]{Corollary}
\theoremstyle{definition}
\theoremstyle{remark}
\numberwithin{equation}{section}
\def\fnote#1{\footnote}
\def\natu{{\mathbb N}}
\def\real{{\mathbb R}}
\def\ignora#1{}
\def\n3#1{\left\vert  \! \left\vert \! \left\vert \, #1 \, \right\vert \!
  \right\vert \! \right\vert }
\begin{document}

\title{Extreme differences between weakly open subsets and convex combinations of slices in Banach spaces}

\author{Julio Becerra Guerrero, Gin{\'e}s L{\'o}pez-P{\'e}rez and Abraham Rueda Zoca}
\address{Universidad de Granada, Facultad de Ciencias.
Departamento de An\'{a}lisis Matem\'{a}tico, 18071-Granada
(Spain)} \email{glopezp@ugr.es, juliobg@ugr.es,
arz0001@correo.ugr.es}

\thanks{The first author was partially supported by MEC (Spain) Grant MTM2011-23843 and Junta de Andaluc\'{\i}a grants
FQM-0199, FQM-1215. The second author was partially supported by
MEC (Spain) Grant MTM2012-31755 and Junta de Andaluc\'{\i}a Grant
FQM-185.} \subjclass{46B20, 46B22. Key words:
  slices, relatively weakly open sets, Radon-Nikodym property, renorming.}
\maketitle \markboth{J. Becerra, G. L\'{o}pez and A. Rueda   }{
  Extreme differences between weakly open subsets and convex combinations of slices }

\begin{abstract}
We show that  every Banach space containing isomorphic copies of
$c_0$ can be equivalently renormed
   so that every nonempty relatively
weakly open subset of its unit ball has diameter 2 and, however,
its unit ball still contains convex combinations of slices with
diameter arbitrarily small, which improves in a optimal way the
known results about the size of this kind of subsets in Banach
spaces.

\end{abstract}

\section{Introduction}
\par
\bigskip

The study of the size of slices, relatively weakly open subsets or
convex combinations of slices in the unit ball of a Banach space
is a relatively recent topic which has received intensive
attention in the last years. For example, in \cite{NyWe} it is
proved that the unit ball of every uniform algebra has all its
slices with diameter 2 and  in \cite{BLR} it is showed that the
unit ball of every non-hilbertizable real $JB^*$-triple has all
its relatively weakly open subsets with diameter 2. Many other
results in this direction have appeared \cite{BeLo,AcBeLo} giving
new geometrical properties in Banach spaces, extremely opposite to
the well known Radon-Nikodym property. See also \cite{ALN}. We
pass now to present these properties joint to its $w^*$-versions.

Given a Banach space $X$, $X$ is said to have the slice diameter 2
property (slice-D2P) if every slice in the unit ball of $X$ has
diameter 2. If every nonempty relatively weakly open subset,
respectively every convex combinations of slices, of the unit ball
of $X$ has diameter 2, we say that $X$ has the diameter 2 property
(D2P), respectively the strong diameter 2 property (strong-D2P).
 Also we define
the weak-star versions of the above properties, the
$w^*$-slice-S2P, $w^*$-D2P and $w^*$-strong-D2P property,
respectively, asking for the above conditions for $w^*$-slices,
nonempty relatively $w^*$-weakly open subsets and convex
combinations of $w^*$-slices of $B_{X^*}$, respectively.

It is clear that $(w^*)$-strong-D2P $\Rightarrow$ $(w^*)$-D2P
$\Rightarrow$ $(w^*)$-slice-D2P. In \cite{BeLoZo},  examples of
Banach spaces $X$ are exhibited satisfying the slice-D2P and
failing in an extreme way the D2P, in the sense that there are
nonempty relatively weakly open subsets in the unit ball with
arbitrarily small diameter. Then the biduals of these spaces,
$X^{**}$, are examples of  dual Banach spaces satisfying the
$w^*$-slice-D2P such that its unit ball contains nonempty
relatively weak-star open subsets with diameter arbitrarily small.

On the other hand there is a Banach space $X$ such that $X^*$
satisfies the $w^*$-strong-D2P, but its unit ball contains convex
combinations of slices with diameter arbitrarily small. Indeed,
take $X=C([0,1])$, the classical Banach space of continuous
functions on $[0,1]$ with the sup norm. Now, it is known that
$X^*=L_1[0,1]\oplus_1 Z$, for some subspace $Z$ of $X^*$ with RNP
\cite{BG}. Then the unit ball of $Z$ contains slices with
arbitrarily small diameter and so, $X^*$ also contains slices with
arbitrarily small diameter. On the other hand, $X$ has Daugavet
property, which implies that $X^*$ has $w^*$-strong-D2P
\cite[Lemma 2.3]{BeLoZo1}. Observe that now we have trivially that
$X^*$ has the $w^*$-slice-D2P and its unit ball contains slices
with diameter arbitrarily small and also $X^*$ has $w^*$-D2P and
its unit ball contains nonempty relatively weakly open subsets
with diameter arbitrarily small. Then the general situation is
shown in the following diagram

$$\begin{array}{ccccc}
{\rm Strong-D2P} & \stackrel{(1)}{\Rightarrow} & {\rm D2P} & \Rightarrow & {\rm slice-D2P}\\
\Downarrow &  & \Downarrow &  & \Downarrow\\
w^*-{\rm Strong-D2P} & \stackrel{(2)}{\Rightarrow} & w^*-{\rm
D2P}&\Rightarrow & w^*-{\rm slice-D2P}
\end{array}
$$

Following the above comments, we observe that all converse
implications, unless (1) and (2) are false in a extreme way, that
is, one can get diameter 2 for one of the properties in every
above pair and diameter arbitrarily small in the other one.

The aim of this note is to prove that ($w^*$)-D2P and
($w^*$)-strong-D2P are also extremely different in the above
sense, and so the converse implications (1) and (2) in the above
diagram are again false in a extreme way. Indeed,  we show in
Theorem~\ref{teorema} that there are Banach spaces $X$ with D2P
such that its unit ball contains convex combinations of slices
with diameter arbitrarily small. In fact every Banach space $X$
containing isomorphic copies of $c_0$ works. Then $X^{**}$ will be
an example of the extreme difference between $w^*$-D2P and
$w^*$-strong-D2P. Note that in \cite{AcBeLo}, it is proved that
$c_0\oplus_2 c_0$ is a Banach space with D2P and failing the
strong-D2P, but as we will see in Proposition~\ref{malo} every
convex combination of slices in the unit ball of $c_0\oplus_p c_0$
has diameter, at least, 1 for every $p\geq 1$.

We pass now to introduce some notation. For a Banach space $X$,
$X^*$ denotes the topological dual of $X$,  $B_X$ and $S_X$ stand
for the closed unit ball and unit sphere of $X$, respectively, and
$w$, respectively $w^*$, denotes the weak and weak-star topology
in $X$, respectively $X^*$. $[A]$ stands for the closed linear
span of the subset $A$ of $X$. We consider only real Banach
spaces. A slice of a set $C$ in $X$ is a set of $X$ given by
$$S=\{x\in C :x^*(x)>\sup x^*(C) -\alpha \}$$
where $x^*\in X^*$ and $0<\alpha  <\sup x^*(C)$. A $w^*$-slice of
a set $C$ of $X^*$ is a slice of $C$ determined by elements of
$X$, seen in $X^{**}$.

Recall that a slice of $B_X$ is a nonempty relatively weakly open
subset of $B_X$ and the family
$$\{\{x\in B_X: \vert x_i^*(x-x_0)\vert<\varepsilon,\ 1\leq i\leq
n\}:n\in\natu ,\ x_1^*,\cdots ,x_n^*\in X^*\}$$ is a basis of
relatively weakly open neighborhoods of $x_0\in B_X$. So every
relatively weakly open subset of $B_X$ has nonempty intersection
with $S_X$, whenever $X$ has infinite dimension.

Finally recall some connections between diameter 2 properties and
another well known geometrical properties in Banach spaces. Given
a Banach space $X$, $X$ is said to have the Daugavet property if
the equality $\Vert I+T\Vert=1+\Vert T\Vert$ holds for every
finite rank operator $T$ on $X$, where $I$ denotes the identity
operator on $X$. The norm of $X$ is said to be octahedral if for
every finite-dimensional subspace $F$ of $X$ and for every
$\varepsilon>0$ there is   $x\in S_X$ satisfying

$$\Vert y+\alpha x\Vert\geq (1-\varepsilon) (\Vert y\Vert +\vert \alpha\vert)\ \ \forall (y\in F, \alpha\in\mathbb R)$$
The norm of $X$ is called extremely rough if
$$~\mbox {lim sup }_{\Vert h\Vert \rightarrow 0}
{\Vert u+h\Vert +\Vert u-h\Vert -2\over \Vert h\Vert }~=2$$ for
every $u\in S_X$.

The Daugavet property implies the strong-D2P \cite{Sh}, the dual
of a Banach space with octahedral norm satisfies the
$w^*$-strong-D2P (see \cite{DGZ}) and the dual (or predual, if it
exists) of a Banach space with D2P has an extremely rough norm
\cite[Proposition I.1.11]{DGZ}.

\section{Main results}
\par
\bigskip

The following proposition shows that the space $c_0\oplus_p c_0$,
which has slice-D2P and fails the strong D2P \cite{AcBeLo}, is far
to satisfy that its unit ball contains convex combination of
slices with arbitrarily small diameter.

\begin{proposition}\label{malo} If $p\geq 1$, every convex combination of slices in
$B_{c_0\oplus_p c_0}$ has diameter at least 1.\end{proposition}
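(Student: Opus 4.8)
The plan is to bound from below the distance between two well-chosen points of the convex combination $C=\sum_{i=1}^n\lambda_i S_i$. First I would record the dual description: since $X=c_0\oplus_p c_0$ has $X^*=\ell_1\oplus_q\ell_1$ with $1/p+1/q=1$, each slice can be written as $S_i=\{(x,y)\in B_X:f_i(x)+g_i(y)>1-\alpha_i\}$ with $(f_i,g_i)\in S_{X^*}$ and $0<\alpha_i<1$, the supremum of $f_i(x)+g_i(y)$ over $B_X$ being $\Vert(f_i,g_i)\Vert=1$. Given $\eta>0$, I would choose in each $S_i$ an almost norming point $(a_i,b_i)$ with $f_i(a_i)+g_i(b_i)>1-\eta$. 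Being free to take $\eta$ arbitrarily small is exactly what will make the final bound independent of the depths $\alpha_i$.

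Writing $s_i=\Vert a_i\Vert_\infty$ and $t_i=\Vert b_i\Vert_\infty$, the first key estimate comes from H\"older's inequality for the pairing $\ell_1\oplus_q\ell_1$ / $c_0\oplus_p c_0$ (valid for every $p\geq 1$, with $q=\infty$ when $p=1$): $1-\eta<f_i(a_i)+g_i(b_i)\leq\Vert f_i\Vert_1 s_i+\Vert g_i\Vert_1 t_i\leq(s_i^p+t_i^p)^{1/p}$. Hence $(s_i^p+t_i^p)^{1/p}>1-\eta$, and since the $\ell_1$-norm dominates the $\ell_p$-norm on $\real^2$ we get $s_i+t_i>1-\eta$.

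Next I would perturb. Because the finitely many sequences $a_i,b_i\in c_0$ and $f_i,g_i\in\ell_1$ all tend to $0$, I can fix one index $k$ in the first copy of $c_0$ and one index $l$ in the second with $\vert a_i(k)\vert,\vert f_i(k)\vert,\vert b_i(l)\vert,\vert g_i(l)\vert<\varepsilon$ for all $i$. For each $i$ put $u_i=(a_i-a_i(k)e_k+s_ie_k,\,b_i-b_i(l)e_l+t_ie_l)$ and let $v_i$ be the same with $-s_i,-t_i$. Replacing the $k$-th (resp.\ $l$-th) coordinate keeps the sup-norm $\leq s_i$ (resp.\ $\leq t_i$), so $u_i,v_i\in B_X$, while the values of $f_i,g_i$ change only by $O(\varepsilon)$; thus for $\eta+4\varepsilon<\min_i\alpha_i$ both $u_i,v_i$ stay in $S_i$. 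Then $u=\sum_i\lambda_iu_i$ and $v=\sum_i\lambda_iv_i$ lie in $C$, and $u-v=(2Ae_k,\,2Be_l)$ with $A=\sum_i\lambda_i s_i$ and $B=\sum_i\lambda_i t_i$.

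The last step is where positivity is essential, and it is the crux. Since $A,B\geq 0$ there is no cancellation, so $\Vert u-v\Vert_X=2(A^p+B^p)^{1/p}\geq 2\max(A,B)\geq A+B=\sum_i\lambda_i(s_i+t_i)>1-\eta$. Letting $\eta\to 0$ yields $\operatorname{diam}(C)\geq 1$. The main obstacle to anticipate is precisely that forming a convex combination might allow the contributions of the individual slices to cancel and collapse the diameter; this is ruled out by perturbing \emph{simultaneously} in both $c_0$-factors in the positive direction, which forces $(A,B)$ into the positive quadrant and lets its $\ell_p$-norm be controlled below by its $\ell_1$-norm, i.e.\ by the weighted average of the quantities $s_i+t_i>1-\eta$.
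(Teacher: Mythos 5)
Your proof is correct and follows essentially the same strategy as the paper's: pick almost-norming points in each slice, perturb them symmetrically at a far-out coordinate by $\pm\Vert\cdot\Vert_\infty$ (which keeps them in the ball and, since the functionals are in $\ell_1$, in the slices), and observe that the difference of the two resulting convex combinations is a positive combination supported on that coordinate, so no cancellation can occur. Your endgame via H\"older and the chain $2(A^p+B^p)^{1/p}\geq 2\max(A,B)\geq A+B=\sum_i\lambda_i(s_i+t_i)>1-\eta$ is a clean shortcut that replaces the paper's case split over the index set where $\Vert x_i\Vert_\infty\geq((1-\varepsilon)/2)^{1/p}$, but this is a streamlining of the same argument rather than a different route.
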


\begin{proof} Put $X=c_0\oplus_p c_0$ and consider
$\sum_{i=1}^n\lambda_i S(B_X , (x_i^*,y_i^*),\alpha_i)$ a convex
combination of slices in $B_X$, where $n\in\natu$, $0<\alpha_i<1$
for every $i$, $(x_i^*,y_i^*)\in S_{X^*}$ and $\lambda_i>0$ for
every $i$ with $\sum_{i=1}^n\lambda_i=1$. If
$\alpha=\min_i\alpha_i$, then $S_i\subset S(B_X ,
(x_i^*,y_i^*),\alpha_i)$, where $S_i=S(B_X,(x_i^*,y_i^*),\alpha)$
for every $i$. Now, given $\varepsilon>0$ arbitrary, for every
$1\leq i\leq n$ we choose $(x_i ,y_i)\in S_i$ such that $\Vert(x_i
,y_i)\Vert_X> 1-\varepsilon$ with  $A_i:=supp(x_i)$ finite and
$B_i:=supp(y_i)$ finite, where $supp(z)=\{n\in\natu :z(n)\neq 0$\}
for every $z\in c_0$. Pick $k_0\geq \max \cup_{i=1}^n
A_i\cup\cup_{i=1}^n B_i$ and $k>k_0$ such that $x_i\pm \Vert
x_i\Vert_{\infty}e_k$, $y_i\pm\Vert y_i\Vert_{\infty}e_k\in S_i$
for every $i$. From here we have that $$diam
(\sum_{i=1}^n\lambda_i S(B_X , (x_i^*,y_i^*),\alpha_i))\geq
diam(\sum_{i=1}^n\lambda_i S_i)\geq$$
$$2\Vert\sum_{i=1}^n\lambda_i(\Vert x_i\Vert_{\infty}e_k ,\Vert
y_i\Vert_{\infty}e_k)\Vert .$$ As $\Vert x_i\Vert_{\infty}^p+\Vert
y_i\Vert_{\infty}^p> 1-\varepsilon$ one has that for every $i$
either $\Vert x_i\Vert_{\infty}\geq
(\frac{1-\varepsilon}{2})^{1/p}$ or $\Vert y_i\Vert_{\infty}\geq
(\frac{1-\varepsilon}{2})^{1/p}$. Put $I=\{i:\Vert
x_i\Vert_{\infty}\geq (\frac{1-\varepsilon}{2})^{1/p}\}$ and
$t=\sum_{i\in I}\lambda_i$ ($t=0$ if $I=\emptyset$). Then $t\in
[0,1]$ and $1-t=\sum_{i\notin I}\lambda_i$. Now we have that
$$diam (\sum_{i=1}^n\lambda_i S(B_X , (x_i^*,y_i^*),\alpha_i))\geq
diam(\sum_{i=1}^n\lambda_i S_i)\geq$$
$$2\Vert\sum_{i=1}^n\lambda_i(\Vert x_i\Vert_{\infty}e_k ,\Vert
y_i\Vert_{\infty}e_k)\Vert \geq$$
$$ 2((\frac{t
(1-\varepsilon)^{1/p}}{2^{1/p}})^p+(\frac{(1-t)(1-\varepsilon)^{1/p}}{2^{1/p}})^p)^{1/p}=$$
$$\frac{2(1-\varepsilon)^{1/p}}{2^{1/p}}(t^p+(1-t)^p)^{1/p}\geq
\frac{2(1-\varepsilon)^{1/p}}{2^{1/p}}(\frac{1}{2^p}+\frac{1}{2^p})^{1/p}=(1-\varepsilon)^{1/p}.$$
Since $\varepsilon$ is arbitrary we get that $diam
(\sum_{i=1}^n\lambda_i S(B_X , (x_i^*,y_i^*),\alpha_i))\geq 1$ and
we are done. \end{proof}

Our first goal in order constructing a Banach space with D2P so
that its unit ball contains convex combinations of slices with
diameter arbitrarily small should be find out a closed, bounded
and absolutely convex subset with diameter 2 so that every
nonempty relatively weakly open subset has diameter 2 and
containing convex combinations of slices with diameter arbitrarily
small. We pass now to describe a family of closed, bounded and
convex subsets in $c_0$ with diameter 1 satisfying that every
nonempty relatively weakly open subset has diameter 1 and
containing convex combinations of slices with diameter arbitrarily
small.

Pick $\{\varepsilon_n\}$ an nonincreasing null scalars sequence.
We construct an increasing sequence of closed, bounded and convex
subsets $\{K_n\}$ in $c_0$ and a sequence $\{g_n\}$ in $c_0$ as
follows: Let $K_1=\{e_1\}$, $g_1=e_1$ and $K_2=co(e_1, e_1+e_2)$.
Choose $l_2>1$ and $g_2,\ldots ,g_{l_2}\in K_2$ a
$\varepsilon_2$-net in $K_2$. Assume that $n\geq 2$ and $m_n,\ l_n
,\ K_n$ and $\{g_1,\ldots ,g_{l_n}\}$ have been constructed, with
$K_n\subset B_{[e_1,\ldots ,e_{m_n}]}$ and $g_i\in K_n$ for every
$1\leq i\leq l_n$. Define $K_{n+1}$ as $$K_{n+1}=co(K_n\cup
\{g_i+e_{m_n+i}:1\leq i\leq l_n\}).$$ Let $l_{n+1}=m_n+l_n$ and
choose $\{g_{l_{n}+1},\ldots ,g_{l_{n+1}}\}\subset K_{n+1}$ so
that $\{g_1 ,\ldots ,g_{l_{n+1}}\}$ is a $\varepsilon_{n+1}$-net
in $K_{n+1}$. Finally we define $K_0=\overline{\cup_n K_n}$. Then
it follows that $K_0$ is a nonempty closed, bounded and convex
subset of $c_0$ such that $x(n)\geq 0$ for every $n\in \natu$ and
$\Vert x\Vert_{\infty}=1$ for every $x\in K_0$ and so
$diam(K_0)\leq 1$.

Now, if $i$ is fixed, we have from the construction that
$\{g_i+e_{m_n+i}\}_n$ is a sequence in $K_0$ weakly convergent to
$g_i$ and $\Vert (g_i-e_{m_n+i})-g_i\Vert=\Vert e_{m_n+i}\Vert=1$
for every $n$. Then $diam(K_0)=1$. We will use freely below the
subset $K_0$ and the above construction. Observe that, from the
above construction, one has that
$$K_0=\overline{\{g_i:i\in\natu\}}^{w}=\overline{\{g_i:i\in\natu\}}.$$

Mention that the construction of $K_0$ follows word for word the
definition of Poulsen simplex in $\ell_2$ \cite{P}, that is, the
unique, unless homeomorphism, Choquet simplex with a dense subset
of extreme points \cite{LO}. In fact, it is known \cite{ArOdRo}
that the weak-star closure of $K_0$ in $\ell_{\infty}$ is afinely
weak-star homeomorphic to the Poulsen simplex. However $K_0$ is
not a Choquet simplex, because it is not weakly compact, $K_0$ is
a simplex in a more general definition than Choquet simplex.

Let us see that $K_0$ satisfies the requirements we are looking
for.

\begin{proposition}\label{k0} $K_0$ is a closed, bounded and convex subset of
$c_0$ with $diam(K_0)=1$ satisfying that every nonempty relatively
weakly open subset of $K_0$ has diameter 1 and $K_0$ contains
convex combinations of slices with diameter arbitrarily
small.\end{proposition}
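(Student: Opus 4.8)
The plan is to treat the three assertions separately, the first two being quick and the third carrying all the weight.

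For the diameter and weak-open statements, note that $\operatorname{diam}(K_0)=1$ is already recorded in the construction: every $x\in K_0$ satisfies $x(n)\ge 0$ and $\|x\|_\infty=1$, so $\operatorname{diam}(K_0)\le 1$, while the pair $g_i,\ g_i+e_{m_n+i}$ forces equality. To see that every nonempty relatively weakly open $W\subseteq K_0$ has diameter $1$, I would use the density recorded in the excerpt, $K_0=\overline{\{g_i:i\in\natu\}}$ (in norm, hence weakly), so $W$ contains some $g_i$. Since $g_i+e_{m_n+i}\in K_0$ and $g_i+e_{m_n+i}\to g_i$ weakly (because $e_{m_n+i}\to 0$ weakly in $c_0$), for large $n$ we have $g_i+e_{m_n+i}\in W$; as $\|(g_i+e_{m_n+i})-g_i\|=1$, this gives $\operatorname{diam}(W)\ge 1$, hence $=1$. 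In particular, every slice of $K_0$, being relatively weakly open, already has diameter $1$.

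For the convex-combination part, I would fix a level $N$, set $c_i:=m_N+i$ for $1\le i\le l_N$ (the level-$N$ escape coordinates, each fresh, i.e.\ exceeding $m_N$ and hence the support of $K_N$), take the $l_N$ slices $S_i:=S(K_0,e_{c_i}^*,\alpha)=\{x\in K_0:x(c_i)>1-\alpha\}$ with equal weights $1/l_N$, and aim to prove $\operatorname{diam}\big(\tfrac1{l_N}\sum_{i=1}^{l_N}S_i\big)\le \tfrac2{l_N}+2\alpha$, which tends to $0$ as $N\to\infty$ and $\alpha\to 0$ (recall $l_{n+1}=m_n+l_n\to\infty$). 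The key lemma is the escape-mass bound $\sum_{i=1}^{l_N}x(c_i)\le 1$ for every $x\in K_0$, proved by induction along the construction: points of $K_N$ have all these coordinates $0$; each generator $g_j+e_{m_N+j}$ contributes exactly $1$; and every escape coordinate introduced at a later level exceeds $m_N+l_N$, so it never adds to the level-$N$ escape mass, the bound then passing to convex hulls and to the closure.

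From this lemma I would extract the concentration of points of $S_i$. Since $c_i$ is created only by the generator $h_i:=g_i+e_{c_i}$ and is untouched by the later fresh escapes, the value $x(c_i)$ is exactly the ``$h_i$-content'' of $x$; as $h_i$ agrees with $g_i$ on the coordinates $\le m_N$, writing $x=x(c_i)\,u+(1-x(c_i))\,r$ with $u$ a pure $h_i$-descendant ($u(c_i)=1$) and $r(c_i)=0$ yields $|x(k)-g_i(k)|\le 2(1-x(c_i))<2\alpha$ for all $k\le m_N$. Moreover the escape-mass bound gives $\sum_{j\ne i}x(c_j)<\alpha$, and a pure $h_i$-descendant has $u(c_j)=0$ for $j\ne i$; more generally any single coordinate $k>m_N+l_N$ has $u(k)>0$ for at most one index $i$, since the generator introducing $k$ carries fixed values on the $c_j$ and, by the escape-mass bound, cannot equal $1$ on two of them. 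Hence in $z=\tfrac1{l_N}\sum_i x_i$ with $x_i\in S_i$ each coordinate $k>m_N$ is charged to at most one slice, so $z(k)\le \tfrac1{l_N}+\alpha$, while each coordinate $k\le m_N$ stays within $2\alpha$ of the fixed value $\tfrac1{l_N}\sum_i g_i(k)$; comparing two such $z,z'$ coordinatewise gives $\|z-z'\|_\infty\le \tfrac2{l_N}+2\alpha$.

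The main obstacle is exactly this last concentration step: because each slice has diameter $1$, an individual $x_i\in S_i$ may well have some high coordinate equal to $1$, so the smallness of the average is not pointwise but comes from the fact that these ``fat directions'' occupy \emph{disjoint} coordinates across the $l_N$ slices. Making the disjointness rigorous, by identifying $x(c_i)$ with the $h_i$-content and controlling the mixed $h_i$-descendants through the escape-mass bound, is where the tree-of-fresh-escapes structure of the construction is essential, and is the step I would write out most carefully.
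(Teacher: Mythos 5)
Your treatment of the first two assertions coincides with the paper's: $diam(K_0)=1$ is read off from the construction, and for a nonempty relatively weakly open $W$ the density of $\{g_i\}$ in $K_0$ puts some $g_i$ in $W$, after which the weakly null spikes $g_i+e_{m_n+i}\in K_0$ give $diam(W)\geq 1$. Where you genuinely diverge is the convex-combination part: the paper disposes of it in one line by citing \cite[Theorem 1.2]{ArOdRo} (the set $K_0$ is precisely the Argyros--Odell--Rosenthal convex subset of $c_0$, where this is proved), whereas you propose a self-contained combinatorial argument with the level-$N$ escape coordinates $c_i=m_N+i$ and the slices $S_i=\{x\in K_0:x(c_i)>1-\alpha\}$ averaged with weights $1/l_N$. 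Your route is essentially sound and is worth having in explicit form: the escape-mass bound $\sum_{i\leq l_N}x(c_i)\leq 1$ does pass through the induction exactly as you say, and the two further estimates you need --- namely $|x(k)-g_i(k)|\leq 2(1-x(c_i))$ for $k\leq m_N$, and, for a later escape coordinate $k$ introduced by a generator $w=g_{j_0}+e_k$, the inequality $x(k)\,(1-g_{j_0}(c_i))\leq 1-x(c_i)$, which forces $x_i(k)<2\alpha$ for every index $i$ except the at most one with $g_{j_0}(c_i)>1/2$ --- are both true, and together they give $\|z-z'\|_\infty\leq \tfrac{2}{l_N}+C\alpha$ with $l_N\to\infty$. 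What the paper's citation buys is brevity; what your argument buys is a proof readable without the reference.

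One repair is needed in the step you yourself flag. The decomposition $x=x(c_i)\,u+(1-x(c_i))\,r$ with $u,r\in K_0$, $u(c_i)=1$, $r(c_i)=0$, does not in general exist inside $K_0$: a later generator $g_j+e_{k'}$ with $0<g_j(c_i)<1$ cannot be so split, because the would-be piece $u_j+e_{k'}$ (with $u_j$ the ``pure part'' of $g_j$) is not an element of $K_0$ --- only $g_j+e_{k'}$ itself is attached to the fresh coordinate $k'$. So the ``pure $h_i$-descendant'' should be dropped as a literal device. Fortunately each of the two inequalities above compares a convex function of $x$ with an affine function of $x$, so each can be verified directly by induction on the levels: it holds on $K_{N+1}$ by inspection, it is inherited by every new generator $g_j+e_{m_M+j}$ of $K_{M+1}$ from the inductive hypothesis applied to $g_j\in K_M$ (the fresh coordinate $m_M+j$ being disjoint from $\{1,\dots,m_N\}\cup\{c_1,\dots,c_{l_N}\}\cup\{k\}$), and it then propagates to convex hulls and to the norm closure. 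With that substitution your concentration and disjointness claims are rigorous and the proof closes.
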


\begin{proof} The fact that $K_0$ is a closed, bounded and convex subset of
$c_0$ with $diam(K_0)=1$ have been proved after the construction
of $K_0$. From \cite[Theorem 1.2]{ArOdRo}, we deduce that $K_0$
has convex combinations of slices with diameter arbitrarily small.
Now pick $U$ a nonempty relatively weakly open subset of $K_0$.
From the construction of $K_0$ we noted that
$K_0=\overline{\{g_i:i\in\natu\}}^{w}$ and so there is $i\in\natu$
such that $g_i\in U$. Now, again from the construction of $K_0$,
$g_i+e_{m_n+i}\in K_0$ for every $n$. Thus, $g_i+e_{m_n+i}\in U$
for every $n$ grater than some $n_0$, since $\{g_i+e_{m_n+i}\}_n$
is weakly convergent to $g_i$. Therefore, $diam(U)\geq \Vert
e_{m_n+i}\Vert =1$. \end{proof}

Our next goal should be to get from $K_0$ a closed, absolutely
convex, bounded subset with diameter 2, containing convex
combinations of slices with diameter arbitrarily small and so that
every nonempty relatively weakly open subset has diameter 2. For
this, we see $K_0$ as a subset of $c$, the space of scalars
convergent sequence with the sup norm and define
$$K=2\overline{co}((K_0-\frac{{\bf 1}}{2})\cup (-K_0+\frac{{\bf
1}}{2})),$$ where ${\bf 1}$ is the sequence of $c$ with every
coordinate equal 1. Now, it is clear that $K$ is a closed,
absolutely convex and bounded subset of $c$ with $diam(K)=2$.

Our next point is constructing a Banach space with D2P and so that
its unit ball contains convex combinations of slices with diameter
arbitrarily small. It is natural to think that this Banach space
is some renorming  of $c$, which would be in fact a renorming of
$c_0$. For this we need the following lemmas.

\begin{lemma}\label{lemac0} Let $X$ be a Banach space containing an isomorphic
copy of $c_0$. Then there is an equivalent norm $| \|
 \cdot\| | $ in $X$ satisfying that $(X
,|\|\cdot\| |)$ contains an isometric copy of $c$ and for every
$x\in B_{(X,|\|\cdot\| |)}$ there are sequences $\{x_n\}$,
$\{y_n\}\in B_{(X,|\|\cdot\| |)}$ weakly convergent to $x$ such
that $ | \| x_n -y_n\| | =2$ for every $n\in \natu$. In fact,
$x_n=x+(1-\alpha_n)e_n$ and $y_n=x-(1+\alpha_n)e_n$ for some
scalar sequence $\{\alpha_n\}$ with $\vert \alpha_n\vert\leq 1$
for every $n$.  \end{lemma}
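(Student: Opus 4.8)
The plan is to build the new norm as the maximum of two seminorms, one capturing a coordinatewise ($\ell_\infty$-type) behaviour in the directions of a copy of $c$ and one controlling the rest of the space. Since $c$ and $c_0$ are isomorphic, the hypothesis gives a subspace $Y\subseteq X$ together with an onto isomorphism $J\colon c\to Y$; write $\pi_n$ for the $n$-th coordinate functional on $c$ and set $\phi_n:=\pi_n\circ J^{-1}$, which I extend by Hahn--Banach to all of $X$ with $\sup_n\Vert\phi_n\Vert\le\Vert J^{-1}\Vert$. I then define
$$|\|x\||:=\max\Bigl\{\sup_n|\phi_n(x)|,\ \mathrm{dist}(x,Y)\Bigr\}\qquad(x\in X),$$
which is clearly a seminorm, as a maximum of two seminorms, and satisfies $|\|x\||\le\max\{\Vert J^{-1}\Vert,1\}\Vert x\Vert$.

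First I would check that $|\|\cdot\||$ is an equivalent norm. The upper bound is immediate; for the lower bound, given $x$ pick $y\in Y$ with $\Vert x-y\Vert\le 2\,\mathrm{dist}(x,Y)\le 2|\|x\||$, estimate $\sup_n|\phi_n(y)|\le(1+2\Vert J^{-1}\Vert)|\|x\||$ and hence $\Vert y\Vert\le\Vert J\Vert(1+2\Vert J^{-1}\Vert)|\|x\||$, so that $\Vert x\Vert\le\bigl(2+\Vert J\Vert(1+2\Vert J^{-1}\Vert)\bigr)|\|x\||$; in particular $|\|\cdot\||$ is definite. Next, since $\mathrm{dist}(y,Y)=0$ for $y\in Y$, one has $|\|y\||=\sup_n|\phi_n(y)|=\sup_n|\pi_n(J^{-1}y)|=\Vert J^{-1}y\Vert_\infty$, so $(Y,|\|\cdot\||)$ is isometric to $c$; this is the point where the full $c$-norm (including the limit direction $J(\mathbf 1)$) is recovered automatically, simply because the norm of $c$ is the supremum of its coordinate functionals. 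Set $e_n:=J(e_n^{c})$, the images of the standard unit vectors of $c$; then $|\|e_n\||=1$ and, as $J$ is bounded and the unit vectors of $c$ are weakly null, $e_n\to 0$ weakly in $X$.

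Finally I would verify the perturbation property. Fix $x\in B_{(X,|\|\cdot\||)}$ and put $\alpha_n:=\phi_n(x)$, so $|\alpha_n|\le|\|x\||\le 1$. Because $e_n\in Y$ we have $\mathrm{dist}(x+te_n,Y)=\mathrm{dist}(x,Y)$ for all $t$, while $\phi_m(x+te_n)=\phi_m(x)$ for $m\ne n$ and equals $\phi_n(x)+t$ for $m=n$; thus setting the $n$-th coordinate to $\pm 1$ keeps the norm at most $1$. Concretely, for $x_n:=x+(1-\alpha_n)e_n$ and $y_n:=x-(1+\alpha_n)e_n$ the $n$-th coordinate becomes $+1$ and $-1$ respectively, every other term stays $\le|\|x\||\le1$, so $x_n,y_n\in B_{(X,|\|\cdot\||)}$; moreover $x_n-y_n=2e_n$ gives $|\|x_n-y_n\||=2$, and $x_n,y_n\to x$ weakly since $(1\mp\alpha_n)e_n$ is a bounded multiple of a weakly null sequence. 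The main obstacle is the interplay in the second paragraph: one must choose the second seminorm so that it both vanishes on the whole copy of $c$ (to preserve the $\ell_\infty$-behaviour in the $e_n$ directions, and hence the length-$2$ chords that force $|\|x_n-y_n\||=2$) and yet is large enough off $Y$ to restore equivalence with the original norm; taking $\mathrm{dist}(\cdot,Y)$ achieves exactly this balance without needing $Y$ to be complemented.
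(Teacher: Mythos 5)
Your proof is correct, but it takes a genuinely different route from the paper's. The paper first makes the copy of $c$ isometric, then uses Sobczyk's theorem to produce, for each separable superspace $Y\supset c$, a projection $P_Y:Y\to c$ of norm at most $8$, sets $\Vert x\Vert_Y=\max\{\Vert P_Y(x)\Vert,\Vert x-P_Y(x)\Vert\}$, and glues these norms together by a limit along an ultrafilter on the family of superspaces in which $c$ has finite codimension; the whole apparatus exists precisely because $c$ need not be complemented in a nonseparable $X$. You sidestep complementation altogether: in place of the projection you use the two seminorms $\sup_n|\phi_n(\cdot)|$ (Hahn--Banach extensions of the coordinate functionals of $c$) and $\mathrm{dist}(\cdot,Y)$, whose maximum is an equivalent norm for exactly the reasons you give. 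Both constructions end up with a norm of the same shape, $\max\{\text{coordinate part},\ \text{remainder part}\}$ with both parts unchanged (except for the $n$-th coordinate) under adding $te_n$, which is the only feature the perturbation argument needs; your identity $|\Vert x+\lambda e_n\Vert|=\max\{|\lambda+\alpha_n|,\beta_n\}$ with $\beta_n\le 1$ is literally the displayed formula in the paper's proof. What your version buys is economy: no Sobczyk, no ultrafilter, and the argument is uniform in the density character of $X$; what the paper's version buys is an actual bounded decomposition $x=P_Yx+(x-P_Yx)$ along the ultrafilter, but nothing beyond the stated conclusion of the lemma is used in Theorem~\ref{teorema}, so your construction serves equally well there. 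The individual estimates you give (the factor $2$ in the choice of $y$ approximating $\mathrm{dist}(x,Y)$, the bound $\Vert\phi_n\Vert\le\Vert J^{-1}\Vert$, $\phi_m(e_n)=\delta_{mn}$, and the weak nullity of $(1\mp\alpha_n)e_n$ from boundedness of the scalars) all check out.
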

\begin{proof}As $X$ contains isomorphic copies of $c$, we can assume that $c$ is, in fact,
 an isometric subspace of $X$. Then for
every $Y$ separable subspace of $X$ containing $c$, there is a
linear and continuous projection $P_Y:Y\longrightarrow c$ with
$\Vert P_Y\Vert\leq 8$. Indeed, let us consider the onto linear
isomorphism $T:c\longrightarrow c_0$ given by
$T(x)(1)=\frac{1}{2}\lim_n x(n)$ and
$T(x)(n)=\frac{1}{2}(x(n)-\lim_n x(n))$ for every $n>1$. Note that
$\Vert T\Vert =1$ and $\Vert T^{-1}\Vert=4$. On the other hand, by Sobczyk Theorem, there exists a linear projection $\pi:Y\rightarrow c_0$ such that $\Vert \pi\Vert \leq 2$. Now $P_Y=T^{-1}\circ \pi$ satisfies $\Vert P_Y\Vert\leq 8$ and is the required projection from $Y$ onto $c$.  

Let $\Upsilon$ be the family of subspaces $Y$ of $X$ containing
$c$ such that $c$ has finite codimension in $Y$. Consider the
filter basis $\Upsilon$ given by $\{Y\in \Upsilon: Y_0\subset
Y\}$, where $Y_0\in \Upsilon$ and call ${\mathcal U}$ the
ultrafilter containing the generated filter by the above filter
basis.

For every $Y\in \Upsilon$, we define a new norm in $X$ given by
$$\Vert x\Vert_Y:=\max\{\Vert P_Y(x)\Vert ,\Vert x-P_Y(x)\Vert\}.$$
Finally, we define the norm on $X$ given by $|\| x\| |
:=\lim_{\mathcal U}\Vert x\Vert_Y$. Observe that $\frac{1}{8}\Vert
x\Vert\leq |\| x\| |\leq 3\Vert x\Vert$ for every $x\in X$ and so
$|\|\cdot\| |$ is an equivalent norm in $X$ such that $|\| x\| |
=\Vert x\Vert_{\infty}$ for every $x\in c$, where
$\Vert\cdot\Vert_{\infty}$ is the sup norm in $c$. Hence $(X,|\|
\cdot\| |)$ contains an isometric copy of $c$.

Pick $x_0 \in B_{(X,|\| \cdot\| |)}$. In order to prove the
remaining statement let $\{e_n\}$ and $\{e_n^*\}$ the usual basis
of $c_0$ and the biorthogonal functionals sequence, respectively.

Choose $\lambda\in \real$ and $n\in \natu$. For every
$Y\in\Upsilon$ with $x_0\in Y$ we have that $$\Vert x_0+\lambda
e_n\Vert_Y =\max\{\Vert P_Y(x_0)+\lambda e_n\Vert ,\Vert
x_0-P_Y(x_0)\Vert\}=$$
$$\max\{\vert\lambda+e_n^*(P_Y(x_0))\vert ,\Vert
P_Y(x_0)-e_n^*(P_Y(x_0))e_n\Vert ,\Vert x_0-P_Y(x_0)\Vert\}.$$

Call  $\beta_n =\lim_{\mathcal U}\max\{ \Vert
P_Y(x_0)-e_n^*(P_Y(x_0))e_n\Vert ,\Vert x_0-P_Y(x_0)\Vert\}$
and\break $\alpha_n=\lim_{\mathcal U}e_n^*(P_Y(x_0))$. Then $\vert
\Vert x_0+\lambda e_n\vert \Vert=\max\{\vert \lambda
+\alpha_n\vert , \beta_n\}$. Note that $\vert \alpha_n\vert\leq 1$
and $\beta_n\leq 1$ since $|\| x_0\| |\leq 1$.

Doing $x_n:=x_0+(1-\alpha_n)e_n$ and $y_n:=x_0-(1+\alpha_n)e_n$
for every $n$, we get that $x_n$, $y_n\in B_{(X,|\| \cdot\| | )}$.
Finally, it is clear that $\{x_n\}$ and $\{y_n\}$ are weakly
convergent sequences to $x_0$ and $|\| x_n-y_n\| |=2$ for every
$n\in \natu$.\end{proof}

\begin{lemma}\label{lema} Let $X$ be a vector space and $A$, $B$
convex subsets of $X$ such that $\frac{A-A}{2}\subset B$. Then
$$co(A\cup -A\cup B)=co(A\cup B)\cup co(-A\cup B).$$
\end{lemma}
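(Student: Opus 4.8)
The plan is to prove the two inclusions separately, the nontrivial one being the left-to-right containment, which hinges on using the hypothesis $\frac{A-A}{2}\subset B$ to absorb the $-A$ part of any convex combination into $B$.

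First I would dispatch the easy inclusion. Since $A\cup B\subset A\cup -A\cup B$ and $-A\cup B\subset A\cup -A\cup B$, monotonicity of the convex hull gives $co(A\cup B)\subset co(A\cup -A\cup B)$ and $co(-A\cup B)\subset co(A\cup -A\cup B)$, hence $co(A\cup B)\cup co(-A\cup B)\subset co(A\cup -A\cup B)$.

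For the reverse inclusion, take $x\in co(A\cup -A\cup B)$ and write it as a convex combination of points drawn from $A$, $-A$ and $B$. Collecting the three groups and using convexity of $A$ and of $B$, I can reduce this to a three-term expression $x=\alpha a-\beta a'+\gamma b$ with $a,a'\in A$, $b\in B$, and $\alpha,\beta,\gamma\geq 0$, $\alpha+\beta+\gamma=1$ (dropping a term whose coefficient vanishes). Note that $\frac{a-a'}{2}\in B$ by hypothesis. By symmetry (interchanging the roles of $A$ and $-A$) I may assume $\alpha\geq\beta$, and then the key algebraic step is to rewrite
$$x=(\alpha-\beta)a+2\beta\cdot\frac{a-a'}{2}+\gamma b,$$
whose coefficients $\alpha-\beta,\,2\beta,\,\gamma$ are nonnegative and sum to $(\alpha-\beta)+2\beta+\gamma=\alpha+\beta+\gamma=1$. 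Since $a\in A$ and both $\frac{a-a'}{2},b\in B$, this exhibits $x$ as a convex combination of a point of $A$ and points of $B$, so $x\in co(A\cup B)$. The symmetric case $\beta\geq\alpha$ yields $x\in co(-A\cup B)$ in exactly the same way, after writing $x=(\beta-\alpha)(-a')+2\alpha\cdot\frac{a-a'}{2}+\gamma b$.

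I would finish by checking the degenerate cases in which $\alpha$, $\beta$ or $\gamma$ equals $0$; these are immediate (for instance, under $\alpha\geq\beta$ the value $\alpha=0$ forces $\beta=0$, so $x=b\in B$). The only real obstacle is spotting the rewriting $\alpha a-\beta a'=(\alpha-\beta)a+2\beta\cdot\frac{a-a'}{2}$: once the hypothesis $\frac{A-A}{2}\subset B$ is invoked to place $\frac{a-a'}{2}$ in $B$, everything else is routine bookkeeping with convex coefficients.
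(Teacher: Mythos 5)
Your proof is correct and follows essentially the same route as the paper's: reduce to a three-term convex combination $\lambda_1 a_1+\lambda_2(-a_2)+\lambda_3 b$ using convexity of $A$ and $B$, then under $\lambda_1\geq\lambda_2$ rewrite it as $(\lambda_1-\lambda_2)a_1+2\lambda_2\cdot\frac{a_1-a_2}{2}+\lambda_3 b$ and invoke $\frac{A-A}{2}\subset B$. The only difference is that you spell out the easy inclusion and the degenerate cases, which the paper leaves implicit.
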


\begin{proof} It is enough to prove that
$$co(A\cup -A\cup B)\subset co(A\cup B)\cup co(-A\cup B).$$

For this, take  $x\in co(A\cup -A\cup B)$. As $A$ and $B$ are
convex subsets we get that $x=\lambda_1 a_1+\lambda_2
(-a_2)+\lambda_3 b$, where $a_1,a_2\in A$, $b\in B$ and $\lambda_1
,\lambda_2, \lambda_3\in [0,1]$ with
$\lambda_1+\lambda_2+\lambda_3=1$.

Assuming that $\lambda_1\geq \lambda_2$, one has that
$$x=(\lambda_1-\lambda_2) a_1+2\lambda_2\frac{a_1-a_2}{2}
+\lambda_3 b .$$ Then $x$ is a convex combination of elements in
$A\cup B$, since from hypotheses $\frac{a_1-a_2}{2}\in B$, and so
$x\in co(A\cup B)$.

If $\lambda_1\leq\lambda_2$, one has similarly that $x\in
co(-A\cup B)$.

In any case, $x\in co(A\cup B)\cup co(-A\cup B)$ and we are
done.\end{proof}

It would be natural to think that some renorming of $c_0$ gives us
our goal space.   The following result shows that this is true for
every Banach space containing $c_0$.

\begin{theorem}\label{teorema} Let $X$ be a Banach space containing isomorphic
copies of $c_0$. Then there is an equivalent norm $\Vert \vert
\cdot \vert \Vert$ in $X$ such that every nonempty relatively
weakly open subset of $B_{(X,\Vert \vert \cdot \vert \Vert)}$ has
diameter 2 and $B_{(X,\Vert \vert \cdot \vert \Vert)}$ contains
convex combinations of slices with diameter arbitrarily
small.\end{theorem}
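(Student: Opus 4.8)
The plan is to transplant the model set $K\subseteq c$ into $X$ and thicken it, using Lemma~\ref{lemac0} to supply both an isometric copy of $c$ and the extra width needed for a diameter~$2$ argument. First I would apply Lemma~\ref{lemac0} and replace the norm of $X$ by the equivalent norm $|\|\cdot\||$, so that $c$ (hence $c_0$, the vectors $e_n$, and $\mathbf 1$) sits isometrically in $(X,|\|\cdot\||)$ and the \emph{witness property} holds: every $x\in B_{(X,|\|\cdot\||)}$ is the weak limit of $x_n=x+(1-\alpha_n)e_n$ and $y_n=x-(1+\alpha_n)e_n$, both in $B_{(X,|\|\cdot\||)}$, with $|\|x_n-y_n\||=2$. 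Inside this fixed norm, $K$ is a closed bounded absolutely convex subset of the isometric copy of $c$. Since $K$ does not generate the whole space (it sits in the affine structure governed by the limit and the first coordinate, so it is far from absorbing), it must be thickened before it can serve as a unit ball.

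Next I would take the new unit ball to be the closed absolutely convex hull
$$\mathcal B=\overline{co}\big((2K_0-\mathbf 1)\cup(\mathbf 1-2K_0)\cup B_{c_0}\cup\rho\,B_{(X,|\|\cdot\||)}\big),$$
where $B_{c_0}$ is the unit ball of the isometric copy of $c_0$ and $\rho>0$ is small. The term $\rho\,B_{(X,|\|\cdot\||)}$ makes $\mathcal B$ absorbing while the $c_0$-ball restores width inside $c$; since all generators lie in $B_{(X,|\|\cdot\||)}$ and $\mathrm{diam}(K)=2$, the set $\mathcal B$ is closed, bounded, absolutely convex, absorbing, with $\mathrm{diam}(\mathcal B)=2$, so its Minkowski functional is an equivalent norm $\Vert\vert\cdot\vert\Vert$ with $\mathcal B=B_{(X,\Vert\vert\cdot\vert\Vert)}\subseteq B_{(X,|\|\cdot\||)}$ and hence $\Vert\vert\cdot\vert\Vert\ge|\|\cdot\||$. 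The decisive structural tool is Lemma~\ref{lema} applied with $A=2K_0-\mathbf 1$ and $B=B_{c_0}$: because $\tfrac{A-A}{2}=K_0-K_0\subseteq B_{c_0}$ (as $\mathrm{diam}(K_0)=1$), the convex hull splits into the two antipodal halves $co(A\cup B_{c_0})$ and $co(-A\cup B_{c_0})$, so that any slice or convex combination of slices can be analysed one half at a time, never mixing $A$ with $-A$.

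For the small convex combinations of slices I would start from the fact that, by \cite[Theorem~1.2]{ArOdRo} together with Proposition~\ref{k0}, $K$ itself contains convex combinations of slices of arbitrarily small diameter, and then lift them. Each functional realising a small slice of $K$ is arranged so that, for $\rho$ small, it attains its supremum over $\mathcal B$ on the $K$-part (the thickening $\rho\,B_{(X,|\|\cdot\||)}$ does not protrude in these directions), whence the corresponding slice of $\mathcal B$ sits in an arbitrarily small neighbourhood of the slice of $K$; the half-decomposition from Lemma~\ref{lema} guarantees the average does not pick up the antipodal half, and averaging as in $K$ produces convex combinations of slices of $\mathcal B$ of arbitrarily small diameter.

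Finally, for the diameter~$2$ property I would show that the points of $\mathcal B$ admitting diameter-$2$ witnesses (lying in $\mathcal B$ and weakly convergent) are weakly dense. For points inherited from $K$ this is internal to $K_0$: from the weakly dense family $\{2g_i-\mathbf 1\}$ one may add $\pm 2e_{m_n+i}$ and remain in $K\subseteq\mathcal B$ while converging weakly, realising $\Vert\vert\cdot\vert\Vert$-distance $2$; for the remaining points one invokes the witness property of Lemma~\ref{lemac0}. Then, given a nonempty relatively weakly open $U\subseteq\mathcal B$, one picks such a witness point $x\in U$ (using that every such $U$ meets the unit sphere), notes that its witnesses eventually lie in $U$, and concludes $\Vert\vert x_n-y_n\vert\Vert=2$ from $\Vert\vert\cdot\vert\Vert\ge|\|\cdot\||$ and $\mathrm{diam}(\mathcal B)=2$, so $\mathrm{diam}(U)=2$. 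The main obstacle is exactly the tension between the two goals: the diameter~$2$ property forces $\mathcal B$ to stay \emph{wide} in the $e_n$-directions so that the witnesses remain inside it, while small convex combinations of slices force $\mathcal B$ to stay \emph{flat} (close to $K$) so that no lifted slice escapes the $K$-face. Reconciling these is precisely what the special form of the witnesses in Lemma~\ref{lemac0} (perturbations by multiples of $e_n$, matching the $e_{m_n+i}$-perturbations built into $K_0$) and the careful choice of $\rho$ are meant to achieve, and verifying that the witnesses genuinely stay in $\mathcal B$ while the lifted slices do not protrude is the heart of the proof.
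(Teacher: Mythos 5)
Your overall architecture matches the paper's: renorm via Lemma~\ref{lemac0} to obtain an isometric copy of $c$ together with the witness sequences, take as new unit ball the closed convex hull of $\pm(2K_0-\mathbf 1)$ and a thickening built from $B_{c_0}$ and a small multiple of $B_X$, use Lemma~\ref{lema} to split off the antipodal half, and prove diameter $2$ by perturbing a weakly dense set of points in the direction $e_{m_n+i}$. The diameter-$2$ half of your sketch is essentially the paper's argument, modulo one slip: since $K_0$ has nonnegative coordinates, $g_i-e_{m_n+i}\notin K_0$, so the $K_0$-component cannot be perturbed by $\pm e_{m_n+i}$; the correct pair is $g_i+e_{m_n+i}$ versus $g_i$ unperturbed, which still yields a difference of exactly $2e_{m_n+i}$ once the $B_X$- and $B_{c_0}$-components are perturbed compatibly via the asymmetric witnesses $x_0+(1-t_j)e_j$, $x_0-(1+t_j)e_j$ of Lemma~\ref{lemac0}.

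The genuine gap is in the lifting of the small convex combinations of slices, which is the heart of the theorem and which you explicitly defer. You assert that a functional realising a small slice of $K_0$ ``attains its supremum over $\mathcal B$ on the $K$-part'' for $\rho$ small, so that the corresponding slice of $\mathcal B$ localises near $2K_0-\mathbf 1$. This fails in general because $B_{c_0}$ enters $\mathcal B$ at full size: for instance, for $x^*=e_1^*$ (which is normalised, $\sup e_1^*(K_0)=1$) one has $\sup e_1^*(B_{c_0})=1=\sup e_1^*(2K_0-\mathbf 1)$, so the slice of $\mathcal B$ it determines swallows a diameter-$2$ piece of $B_{c_0}$ and no averaging of such slices need be small. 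Lemma~\ref{lema} does not repair this: it writes $\mathcal B$ as a \emph{union} of two convex sets, and a slice of a union is the union of the slices, so nothing prevents a slice from meeting both halves, let alone from meeting the thickening. The paper's resolution is precisely the ingredient absent from your proposal: it does not lift slices at all, but instead considers relatively weakly open sets $U_i$ cut out by the slice functional \emph{together with} the auxiliary condition $\lim_k x(k)<-1+\rho^2$. Since the extension of $\lim_k$ equals $-1$ on $2K_0-\mathbf 1$, equals $+1$ on its antipode, and is bounded below by $-(1-\varepsilon)$ on the thickening $(1-\varepsilon)B_X+\varepsilon B_{c_0}$, this single extra inequality forces the convex coefficient of the $2K_0-\mathbf 1$ component to be close to $1$, which is what localises $\frac{1}{n}\sum_{i}U_i$ near $\frac{1}{n}\sum_i S_i$ and gives diameter at most $\gamma$. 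One then concludes by Bourgain's lemma \cite[Lemme 5.3]{BB} that every nonempty relatively weakly open subset of the ball contains a convex combination of slices. Without such a localisation device and without the appeal to Bourgain's lemma, the step you describe cannot be completed as stated.
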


\begin{proof} From the  Lemma~\ref{lemac0}, we can assume that $X$
contains an isometric copy of $c$ and  for every $x\in B_{X}$
there are sequences $\{x_n\}$, $\{y_n\}\in B_{X}$ weakly
convergent to $x$ such that $ \Vert x_n -y_n\Vert =2$ for every
$n\in \natu$.

Fix $0<\varepsilon <1$ and consider in $X$ the equivalent norm
$\Vert \cdot \Vert_{\varepsilon}$ whose unit ball is
$B_{\varepsilon}=\overline{co}(2(K_0-\frac{{\bf 1}}{2})\cup
2(-K_0+\frac{{\bf 1}}{2})\cup [(1-\varepsilon )B_X+\varepsilon
B_{c_0}])$. Then we have $\Vert x\Vert \leq\Vert
x\Vert_{\varepsilon}\leq \frac{1}{1-\varepsilon}\Vert x\Vert$ for
every $x\in X$ and $\Vert x\Vert=\Vert x\Vert_{\infty}$ for every
$x\in c$.

Fix $\gamma >0$. From Proposition~\ref{k0}, there exist
$S_1,\cdots ,S_n$ slices of $K_0$ such that
$$\dim (\frac{1}{n}\sum_{i=1}^nS_i )<\frac{1}{4}(1-\varepsilon )\gamma .$$ We
can assume that $S_i=\{x\in K: x_i^*(x)>1-\widetilde{\delta }\}$
where $x_i^*\in c^*$ and $\sup x_i^*(K_0)=1$  for every
$i=1,\ldots ,n$ and $0< \widetilde{\delta } <1$. Denote by ${\bf
1}$ the sequence in $c$ with all its coordinates equal 1. It is
clear that $\sup x_i^*(2(K_0-\frac{{\bf
1}}{2}))=2(1-x_i^*(\frac{{\bf 1}}{2}))$, for all $i=1,\cdots ,n$.
We put $\rho ,\delta>0$ such that $\frac{1}{2}\rho \Vert
x_i^*\Vert +\delta <\widetilde{\delta }$, $2\rho <\varepsilon$,
$\rho \Vert x_i^*\Vert < 4\delta$, and $\frac{(7-2\varepsilon
)\rho }{(1-\varepsilon )}< \gamma $, for all $i=1,\ldots ,n$. We
consider the relatively weakly open set of $B_\varepsilon $ given
by
$$ U_i:=\{x\in B_\varepsilon :x_i^*(x)>2(1-\delta -x_i^*(\frac{{\bf 1}}{2}))+\frac{1}{2}\rho \Vert x_i^*\Vert , \ \lim_k
x(k)<-1+\rho^2\}$$ for every $i=1,\ldots ,n$, where $x_i^*$ and
$\lim_n$ denote the Hahn-Banach extensions to $X$ of the
corresponding functionals on $c$. It is clear that $\Vert
x_i^*\Vert_{\varepsilon}=\Vert x_i^*\Vert $ for every $i=1,\ldots
, n$ and $\Vert \lim_n\Vert_{\varepsilon}=\Vert \lim_n \Vert =1$.

Since $\rho \Vert x_i^*\Vert < 4\delta$, we have that
$2(1-x_i^*(\frac{{\bf 1}}{2}))>2(1-\delta -x_i^*(\frac{{\bf
1}}{2}))+\frac{1}{2}\rho \Vert x_i^*\Vert$. Now, we have that
$\sup x_i^*(2(K_0-\frac{{\bf 1}}{2}))=2(1-x_i^*(\frac{{\bf
1}}{2}))$, then there exist $x\in K_0$ such that
$x_i^*(2(x-\frac{{\bf 1}}{2}))>2(1-\delta -x_i^*(\frac{{\bf
1}}{2}))+\frac{1}{2}\rho \Vert x_i^*\Vert$ and $\lim_k 2(
x(k)-\frac{1}{2})=-1<-1+\rho^2$. This implies that $U_i  \neq
\emptyset$ for every $i=1,\ldots ,n$. In order to estimate the
diameter of $\frac{1}{n}\sum_{i=1}^nU_i $, it is enough to compute
the diameter of
$$\frac{1}{n}\sum_{i=1}^nU_i \cap co(2(K_0- \frac{{\bf 1}}{2})\cup
-2(K_0- \frac{{\bf 1}}{2})\cup [(1-\varepsilon) B_X+\varepsilon
B_{c_0}]).$$ Since $2(K_0- \frac{{\bf 1}}{2})$ and $(1-\varepsilon
)B_X+\varepsilon B_{c_0}$ are a convex subsets of $B_\varepsilon$,
given $x\in B_\varepsilon$, we can assume that $x=\lambda _1 2(a-
\frac{{\bf 1}}{2})+\lambda _2 2 (-b+ \frac{{\bf 1}}{2})+\lambda _3
[(1-\varepsilon )x_0+\varepsilon y_0]$, where $\lambda _i\in
[0,1]$ with $\sum_{i=1}^3\lambda_i=1$ and $a,b\in K_0$, $x_0\in
B_X$, and $y_0\in B_{c_0}$.

Given $x,y\in \frac{1}{n}\sum_{i=1}^nU_i$, for $i=1,\cdots , n$,
there exist $a_i,a'_i,b_i,b'_i\in K_0$, $\lambda _{(i,j)},\lambda
'_{(i,j)}\in [0,1]$ with $j=1,2,3$ and, $x_i, x_i'\in B_X$, and
$y_i,y_i'\in B_{c_0}$, such that,
$$2\lambda_{(i,1)} (a_i-\frac{{\bf
1}}{2})+2\lambda_{(i,2)}(-b_i+\frac{{\bf
1}}{2})+\lambda_{(i,3)}[(1-\varepsilon )x_i+\varepsilon y_i]$$
$$2\lambda_{(i,1)}' (a_i-\frac{{\bf
1}}{2})+2\lambda_{(i,2)}'(-b_i+\frac{{\bf
1}}{2})+\lambda_{(i,3)}'[(1-\varepsilon )x_i'+\varepsilon y_i']$$
belong to $U_i$ and
$$x=\frac{1}{n}\sum_{i=1}^n 2\lambda_{(i,1)} (a_i-\frac{{\bf
1}}{2})+2\lambda_{(i,2)}(-b_i+\frac{{\bf
1}}{2})+\lambda_{(i,3)}[(1-\varepsilon )x_i+\varepsilon y_i]$$ and
$$y=\frac{1}{n}\sum_{i=1}^n 2\lambda_{(i,1)}' (a_i-\frac{{\bf
1}}{2})+2\lambda_{(i,2)}'(-b_i+\frac{{\bf
1}}{2})+\lambda_{(i,3)}'[(1-\varepsilon )x_i'+\varepsilon y_i'].$$

For $i=1,\ldots , n$, we have that $$2\lambda_{(i,1)}
(a_i-\frac{{\bf 1}}{2})+2\lambda_{(i,2)}(-b_i+\frac{{\bf
1}}{2})+\lambda_{(i,3)}[(1-\varepsilon )x_i+\varepsilon y_i]\in
U_i,$$ then $$\lim_k(2\lambda_{(i,1)} (a_i-\frac{{\bf
1}}{2})+2\lambda_{(i,2)}(-b_i+\frac{{\bf
1}}{2})+\lambda_{(i,3)}[(1-\varepsilon )x_i+\varepsilon
y_i])<-1+\rho^2.$$

This implies that
$$2\lambda _{(i,2)}+\lambda _{(i,3)}\varepsilon -1= -\lambda _{(i,1)}+\lambda _{(i,2)}
-\lambda _{(i,3)}(1-\varepsilon )<-1+\rho ^2.$$ Since
$2\rho<\varepsilon$, we deduce that
$\lambda_{(i,2)}+\lambda_{(i,3)}<\frac{1}{2}\rho$. As a
consequence we get that

\begin{equation}\label{landa}
\lambda_{(i,1)}>1-\frac{1}{2}\rho , \end{equation}
 and similarly we get that
\begin{equation}\label{landaprima}
\lambda'_{(i,1)}>1-\frac{1}{2}\rho ,
\end{equation}
for every $i=1,\ldots ,n$. Now, applying \ref{landa}, and
\ref{landaprima}, we have that
$$\Vert x-y\Vert_\varepsilon \leq \frac{1}{n}\Vert \sum_{i=1}^n  2\lambda_{(i,1)} (a_i-\frac{{\bf
1}}{2})-2\lambda_{(i,1)}' (a_i'-\frac{{\bf 1}}{2})\Vert
_\varepsilon + $$
$$\frac{1}{n}\sum_{i=1}^n \Vert
2\lambda_{(i,2)}(-b_i+\frac{{\bf 1}}{2})\Vert _\varepsilon +
\frac{1}{n}\sum_{i=1}^n \Vert 2\lambda_{(i,2)}'(-b_i'+\frac{{\bf
1}}{2})\Vert _\varepsilon +$$
$$\frac{1}{n}\sum_{i=1}^n \Vert
\lambda_{(i,3)}[(1-\varepsilon )x_i+\varepsilon
y_i]\Vert_\varepsilon + \frac{1}{n}\sum_{i=1}^n \Vert
\lambda_{(i,3)}'[(1-\varepsilon )x_i'+\varepsilon y_i']\Vert
_\varepsilon \leq $$

$$\frac{1}{n}\Vert \sum_{i=1}^n
2\lambda_{(i,1)} (a_i-\frac{{\bf 1}}{2})-2\lambda_{(i,1)}'
(a_i'-\frac{{\bf 1}}{2})\Vert _\varepsilon +$$
$$\frac{1}{n}\sum_{i=1}^n
(\lambda_{(i,2)}+\lambda_{(i,3)})+\frac{1}{n}\sum_{i=1}^n
(\lambda_{(i,2)}'+\lambda_{(i,3)}')\leq$$

$$\frac{1}{n}\Vert \sum_{i=1}^n
 2\lambda_{(i,1)} (a_i-\frac{{\bf 1}}{2})-2\lambda_{(i,1)}'
(a_i'-\frac{{\bf 1}}{2})\Vert _\varepsilon +\rho \leq $$

$$ \frac{2}{n}\Vert \sum_{i=1}^n
 \lambda_{(i,1)} a_i-\lambda_{(i,1)}' a_i'\Vert _\varepsilon +
\frac{1}{n}\sum_{i=1}^n \vert
\lambda_{(i,1)}-\lambda_{(i,1)}'\vert \Vert {\bf
1}\Vert_\varepsilon +\rho \leq $$

$$ \frac{2}{n}\Vert \sum_{i=1}^n
\lambda_{(i,1)} a_i-\lambda_{(i,1)}' a_i'\Vert _\varepsilon +
\frac{(3-2\varepsilon)}{2(1-\varepsilon)}\rho .
$$

Now  $$\Vert \sum_{i=1}^n \lambda_{(i,1)}
a_i-\lambda_{(i,1)}'a'_i\Vert_\varepsilon \leq $$ $$ \Vert
\sum_{i=1}^n (\lambda_{(i,1)}-1) a_i\Vert_\varepsilon +\Vert
\sum_{i=1}^n a_i-a'_i\Vert_\varepsilon +\Vert \sum_{i=1}^n
(\lambda'_{(i,1)}-1) a'_i\Vert_\varepsilon \leq$$

$$\frac{1}{1-\varepsilon}\Vert \sum_{i=1}^n a_i-a'_i\Vert  +\sum_{i=1}^n
\frac{1}{1-\varepsilon}\vert\lambda_{(i,1)}-1\vert \Vert a_i\Vert
+\sum_{i=1}^n \frac{1}{1-\varepsilon}\vert\lambda_{(i,1)}'-1\vert
\Vert a_i'\Vert \leq$$
$$\frac{1}{1-\varepsilon}\Vert \sum_{i=1}^n a_i-a'_i\Vert  +\frac{1}{1-\varepsilon}n\rho .$$

We deduce that
\begin{equation}\label{acotado}
\Vert x-y\Vert_\varepsilon \leq \frac{2}{1-\varepsilon}\Vert
\frac{1}{n} \sum_{i=1}^n a_i-a'_i\Vert
+\frac{(7-2\varepsilon)}{2(1-\varepsilon )}\rho .
\end{equation}
On the other hand, we have that, for every $i=1,\ldots,n$,

$$x_i^*(2\lambda_{(i,1)}
(a_i-\frac{{\bf 1}}{2})+2\lambda_{(i,2)}(-b_i+\frac{{\bf
1}}{2})+\lambda_{(i,3)}[(1-\varepsilon )x_i+\varepsilon
y_i])>$$$$2(1-\delta -x_i^*(\frac{{\bf 1}}{2}))+\frac{}{}\rho
\Vert x_i^*\Vert ,$$ then

$$x_i^*(2\lambda_{(i,1)}
(a_i-\frac{{\bf 1}}{2}))+\frac{1}{2}\rho \Vert x_i^*\Vert \geq$$

$$x_i^*(2\lambda_{(i,1)}
(a_i-\frac{{\bf 1}}{2}))+\lambda_{(i,2)}\Vert x_i^*\Vert
_\varepsilon +\lambda_{(i,3)}\Vert x_i^*\Vert_\varepsilon \geq$$

$$x_i^*(2\lambda_{(i,1)} (a_i-\frac{{\bf
1}}{2})+2\lambda_{(i,2)}(-b_i+\frac{{\bf
1}}{2})+\lambda_{(i,3)}[(1-\varepsilon )x_i+\varepsilon y_i]).$$

We have that $$x_i^*(2\lambda_{(i,1)} (a_i-\frac{{\bf
1}}{2}))>2(1-\delta -x_i^*(\frac{{\bf 1}}{2})),$$ and hence
$$x_i^*(\lambda_{(i,1)}a_i)>1-\delta -(1-\lambda_{(i,1)})x_i^*(\frac{{\bf 1}}{2}))\geq  1-\delta -\frac{1}{2}\rho \Vert x_i^*\Vert
.$$We recall that $\delta +\frac{1}{2}\rho \Vert x_i^*\Vert <
\widetilde{\delta } $, then
$x_i^*(\lambda_{(i,1)}a_i)>1-\widetilde{\delta }$. It follows that
$x_i^*(a_i)>1-\widetilde{\delta }$. Now $a_i\in K_0\cap S_i$, and
similarly we get that $a'_i\in K_0\cap S_i$, for every $i=1,\ldots
,n$, and $\frac{1}{n}\sum_{i=1}^n a_i,\ \frac{1}{n}\sum_{i=1}^n
a'_i\in \frac{1}{n}\sum_{i=1}^n S_i$. Since the diameter of
$\frac{1}{n}\sum_{i=1}^n S_i$ is less than
$\frac{1}{4}(1-\varepsilon )\gamma $, we deduce that
$\frac{1}{n}\Vert \sum_{i=1}^n a_i-a'_i\Vert
<\frac{1}{4}(1-\varepsilon )\gamma $. Finally, we conclude from
\ref{acotado} and the above estimation that $$ \Vert
x-y\Vert_\varepsilon \leq \gamma .$$ Hence the set
$\frac{1}{n}\sum_{i=1}^n U_i$ has diameter, at most $\gamma$, for
the norm $\Vert\cdot\Vert_{\varepsilon}$. We recall now that every
relatively weakly open subset of $B_\varepsilon$ contains a convex
combination of slices \cite[Lemme 5.3]{BB}. So we conclude that
$B_\varepsilon$ has convex combinations of slices with diameter
arbitrarily small.

In order to prove that every nonempty relatively weakly open
subset of $B_\varepsilon$ has diameter 2, we recall that
$K_0=\overline{\{g_i:i\in\natu\}}$.

Recall  that $B_{\varepsilon}=\overline{co}(2(K_0-\frac{{\bf
1}}{2})\cup 2(-K_0+\frac{{\bf 1}}{2})\cup [(1-\varepsilon
)B_X+\varepsilon B_{c_0}])$. Call $A=2(K_0-\frac{{\bf 1}}{2})$ and
$B=(1-\varepsilon )B_X+\varepsilon B_{c_0}$. Now $A$ and $B$ are
convex subsets of $X$ and $B_{\varepsilon}=co(A\cup -A\cup B)$.
Observe that $\frac{A-A}{2}=K_0-K_0$ and so $\frac{A-A}{2}\subset
B_{c_0}\subset B$, from the definition of $K_0$.

Thus, in order to prove that every nonempty relatively weakly open
subset of $B_{\varepsilon}$ has
$\Vert\cdot\Vert_{\varepsilon}$-diameter 2 it is enough to prove,
from Lemma~\ref{lema}, that every nonempty relatively weakly open
subset of $\overline{co}((2K_0-{\bf 1})\cup [(1-\varepsilon )B_X
+\varepsilon B_{c_0}])$ has
$\Vert\cdot\Vert_{\varepsilon}$-diameter 2.

Pick $U$ a weakly open subset of $X$ such that $$U\cap
\overline{co}((2K_0-{\bf 1})\cup [(1-\varepsilon )B_X +\varepsilon
B_{c_0}])\neq\emptyset ,$$ then there is $g_i\in K_0$, $x_0\in
B_X$, $y_0\in B_{c_0}$ and $\lambda \in [0,1]$ such that $\lambda
(2g_i-{\bf 1})+(1-\lambda )[(1-\varepsilon )x_0+\varepsilon y_0]$
belong to $U$.

As $U$ is a norm open set, we can assume that $y_0$ has finite
support. From  Lemma~\ref{lemac0}, there is a scalar sequence
$\{t_j\}$ with $\vert t_j\vert \leq 1$ for every $j$ such that,
putting $x_j=x_0+(1-t_j)e_j$ and $y_j=x_0-(1+t_j)e_j$ for every
$j$, we have that $\{x_j\}$ and $\{y_j\}$ are weakly convergent
sequences in $B_X$ to $x_0$. We put $j_0$ such that
$e_{j}^*(y_0)=0$ for every $j\geq j_0$, then $y_0+e_j,y_0-e_j\in
B_{c_0}$ for every $j\geq j_0$. Now, again from the construction
of $K_0$, $g_i+e_{m_n+i}\in K_0$ for every $n$, and hence,
$\{g_i+e_{m_n+i}\}_n$ is weakly convergent to $g_i$.

Therefore we get for $n$ conveniently big  that
$$x:=\lambda (2(g_i+e_{m_n+i})-{\bf 1})+(1-\lambda
)[(1-\varepsilon )x_{m_n+i})+\varepsilon (y_0+e_{m_n+i}))]$$ and
$$y:=\lambda (2(g_i-{\bf 1})+(1-\lambda )[(1-\varepsilon
)y_{m_n+i})+\varepsilon (y_0-e_{m_n+i}))]$$ belong to $U$.
Therefore $$diam_{\Vert\cdot\Vert_{\varepsilon}}(U)\geq \Vert
x-y\Vert_{\varepsilon}=$$ $$\Vert 2 \lambda e_{m_n+i}+(1-\lambda
)[2 (1-\varepsilon )e_{m_n+i}+2\varepsilon
e_{m_n+i}]\Vert_\varepsilon =$$ $$ 2\Vert
e_{m_n+i}\Vert_\varepsilon \geq 2\Vert e_{m_n+i}\Vert =2\Vert
e_{m_n+i}\Vert_\infty =2 .$$ We conclude that
$diam_{\Vert\cdot\Vert_{\varepsilon}}(U)=2$.\end{proof}

The following consequence shows that there are many spaces
satisfying D2P and failing strong-D2P.

\begin{corollary} Every Banach space containing isomorphic copies
of $c_0$ can be equivalently renormed satisfying D2P and failing
strong-D2P.\end{corollary}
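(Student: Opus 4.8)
The plan is to read off the Corollary directly from Theorem~\ref{teorema}, since the two conclusions of that theorem are, almost verbatim, the definitions of D2P and of the (extreme) failure of strong-D2P. So the whole proof is a translation into the terminology introduced in the Introduction, with no new construction required.

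Concretely, I would start by applying Theorem~\ref{teorema} to the given space $X$ (which contains isomorphic copies of $c_0$), obtaining the equivalent norm $\Vert\vert\cdot\vert\Vert$ on $X$. The first conclusion of the theorem states that every nonempty relatively weakly open subset of $B_{(X,\Vert\vert\cdot\vert\Vert)}$ has diameter $2$; by the definition of D2P recalled in the Introduction, this is precisely the assertion that $(X,\Vert\vert\cdot\vert\Vert)$ has the D2P. Hence the first half of the Corollary is immediate.

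For the second half, I would invoke the remaining conclusion of Theorem~\ref{teorema}: the unit ball $B_{(X,\Vert\vert\cdot\vert\Vert)}$ contains convex combinations of slices with diameter arbitrarily small. Recall that strong-D2P requires \emph{every} convex combination of slices of the unit ball to have diameter $2$. Since we have produced convex combinations of slices of arbitrarily small (hence, in particular, strictly less than $2$) diameter, the space $(X,\Vert\vert\cdot\vert\Vert)$ cannot satisfy strong-D2P; indeed it fails it in the strongest possible way. This completes the argument.

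I expect no real obstacle here: the content is entirely contained in Theorem~\ref{teorema}, and the only thing to check is the bookkeeping that D2P and strong-D2P are exactly the two properties governed by relatively weakly open subsets and convex combinations of slices, respectively, as set up in the Introduction. The single point worth stating explicitly is that producing one convex combination of slices of diameter below $2$ already defeats strong-D2P, so the "arbitrarily small" strengthening of Theorem~\ref{teorema} is more than enough.
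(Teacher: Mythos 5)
Your proposal is correct and coincides with the paper's treatment: the corollary is stated there as an immediate consequence of Theorem~\ref{teorema}, with exactly the reading you give (the first conclusion is the D2P verbatim, and a single convex combination of slices of diameter less than $2$ already defeats the strong-D2P). Nothing further is needed.
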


Finally, we get a stability property for Banach spaces with D2P
and failing strong-D2P.

\begin{corollary} The Banach spaces with D2P and failing strong-D2P are stable for $l_1$-sums.\end{corollary}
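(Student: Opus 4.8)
The plan is to verify the two defining properties separately for the $\ell_1$-sum $X=\left(\bigoplus_{i\in I} X_i\right)_{\ell_1}$ of a family $\{X_i\}_{i\in I}$ of spaces each enjoying the D2P and failing the strong-D2P, using that $X^*=\left(\bigoplus_{i\in I}X_i^*\right)_{\ell_\infty}$. Throughout I write $x=(x_i)_i$ for a generic point of $X$ and $f=(f^i)_i$ for a generic functional, so that $\Vert x\Vert=\sum_i\Vert x_i\Vert$ and $\Vert f\Vert=\sup_i\Vert f^i\Vert$.

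First I would show that $X$ has the D2P. Let $U$ be a nonempty relatively weakly open subset of $B_X$; since $X$ is infinite dimensional I may pick $x\in U\cap S_X$ and shrink $U$ to a basic neighbourhood $\{y\in B_X:\vert f_k(y-x)\vert<\varepsilon,\ 1\le k\le m\}$. Fixing $\eta>0$, choose $N$ with $\sum_{i>N}\Vert x_i\Vert<\eta$ and set $t_i=\Vert x_i\Vert$. For each $i\le N$ with $t_i>0$ the set $W_i=\{w\in B_{X_i}:\vert f_k^i(t_iw-x_i)\vert<\varepsilon/N,\ 1\le k\le m\}$ is a nonempty relatively weakly open subset of $B_{X_i}$ (it contains $x_i/t_i$), so by the D2P of $X_i$ it has diameter $2$ and I may choose $w_i,w_i'\in W_i$ with $\Vert w_i-w_i'\Vert>2-\eta$. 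Defining $u,v\in X$ by $u_i=t_iw_i$, $v_i=t_iw_i'$ for these $i$ and $u_i=v_i=x_i$ otherwise, one checks that $\Vert u\Vert,\Vert v\Vert\le\Vert x\Vert=1$, that $\vert f_k(u-x)\vert,\vert f_k(v-x)\vert<\varepsilon$ (whence $u,v\in U$), and that $\Vert u-v\Vert=\sum_{i\le N}t_i\Vert w_i-w_i'\Vert\ge(2-\eta)(1-\eta)$. Letting $\eta\to0$ gives $\mathrm{diam}(U)=2$.

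Next I would show that $X$ fails the strong-D2P. Fix any $i_0\in I$; since $X_{i_0}$ fails the strong-D2P there is a convex combination $C=\sum_{k=1}^n\lambda_k S(B_{X_{i_0}},\phi_k,\beta_k)$ of slices with $\mathrm{diam}(C)=d<2$, and after normalising I may take $\Vert\phi_k\Vert=1$, so $\sup_{B_{X_{i_0}}}\phi_k=1$. Because shrinking each $\beta_k$ only shrinks the slices, I may replace $\beta_k$ by $\beta_k'=\min\{\beta_k,\tau\}$ for a small $\tau>0$ with $2\tau<2-d$ without increasing the diameter. Now lift these to slices of $B_X$ via the functionals $\Phi_k=(\phi_k,0,\dots)\in X^*$, which satisfy $\sup_{B_X}\Phi_k=1$, and consider $\widetilde C=\sum_k\lambda_k S(B_X,\Phi_k,\beta_k')$. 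Any point of the $k$-th slice has $X_{i_0}$-coordinate lying in $S(B_{X_{i_0}},\phi_k,\beta_k')$ and total mass $<\beta_k'$ off the $i_0$-coordinate; summing as in $\ell_1$ one gets that two points of $\widetilde C$ differ by at most $d+2\tau<2$ in norm, so $\mathrm{diam}(\widetilde C)<2$ and the strong-D2P fails.

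The main obstacle is the D2P step. Unlike for $c_0$- or $\ell_\infty$-sums, the coordinate functionals $f_k^i$ need not vanish as $i\to\infty$, so one cannot produce a large separation by perturbing in a single far-out coordinate where the functionals are negligible. The device that makes it work is to perturb all of the finitely many coordinates carrying the bulk of the mass of $x$ simultaneously: each single-coordinate perturbation stays inside $U$ by the choice of $W_i$, while the $\ell_1$-norm adds the gains $t_i\Vert w_i-w_i'\Vert$, recovering a total separation close to $2\sum_i t_i=2$. Combining the two steps proves that the conjunction D2P-and-failing-strong-D2P passes to arbitrary $\ell_1$-sums.
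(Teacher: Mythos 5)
Your proof is correct, but it takes a genuinely different route from the paper's in the D2P half. The paper derives the corollary from a general proposition; for the failure of the strong-D2P that proposition lifts a convex combination of slices from a single summand exactly as you do, via the inclusion $S(B_Z,(x_n^*,0),\mu)\subset S(B_{X_n},x_n^*,\alpha)\times \mu B_{Y_n}$, which costs at most $2\mu$ in diameter --- so that half of your argument coincides with the paper's. For the D2P, however, the paper truncates the functionals defining a finite intersection of slices by the canonical projections $P_k$ onto the first $k$ summands, reduces to a finite $\ell_1$-sum, and invokes the known fact (cited from the literature) that finite $\ell_1$-sums of D2P spaces have the D2P. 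You instead give a direct, self-contained construction: you distribute the weak-neighbourhood constraints as $\varepsilon/N$-conditions over the finitely many coordinates carrying almost all the mass of the centre $x$, apply the D2P coordinatewise to the rescaled neighbourhoods $W_i$, and let the $\ell_1$-norm add the gains $t_i\Vert w_i-w_i'\Vert$ to reach a separation of $(2-\eta)(1-\eta)$. Your route in effect reproves the finite-sum stability rather than citing it, and it also establishes the corollary directly from the hypothesis that each summand merely fails the strong-D2P, whereas the paper's proposition is stated under the stronger assumption that the summands admit convex combinations of slices of diameters $\varepsilon_n\to 0$ (its proof nonetheless yields the corollary). Both arguments are sound; yours is more elementary and self-contained, the paper's is shorter given the cited result and proves a stronger quantitative conclusion.
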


The proof of the above corollary follows from   the following
general proposition, which gives the stability under $\ell_1$-sums
of the D2P and small convex combinations of slices. In fact this
stability property holds for $1\leq p<\infty$.

\begin{proposition} Let $\{X_n\}$ be a sequence of Banach spaces
satisfying the D2P and put $Z:=\ell_1-\bigoplus_n X_n$. Assume
that $\{\varepsilon_n\}$ is a null scalars sequence such that for
every $n\in \natu$ there is a convex combination of slices in
$X_n$ with diameter, at most, $\varepsilon_n$. Then $Z$ satisfies
the D2P and $$\inf\{diam(T): T\ {\rm convex \ combination \ of \
slices\  in}\ B_Z\}=0.$$\end{proposition}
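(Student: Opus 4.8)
The plan is to prove the two assertions separately, handling the D2P of $Z:=\ell_1\text{-}\bigoplus_n X_n$ first and the existence of arbitrarily small convex combinations of slices second. For the D2P, I would exploit the standard description of relatively weakly open subsets via finitely many functionals: a basic such subset of $B_Z$ is determined by $z_1^*,\ldots,z_k^*\in Z^*=\ell_\infty\text{-}\bigoplus_n X_n^*$, a point $z_0\in B_Z$, and $\eta>0$. Given any point $z_0=(x_n)_n\in B_Z$ in such a set $U$, the key observation is that the tail of an $\ell_1$-sum is weakly negligible: one may choose an index $N$ so large that the contribution of the coordinates beyond $N$ to the defining functionals is arbitrarily small, and then perturb $z_0$ only in a single coordinate $X_m$ with $m>N$. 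Since each $X_m$ has the D2P, inside the unit ball of $X_m$ one finds, through a relatively weakly open neighbourhood of the relevant scaled point, two elements whose difference has norm close to $2$. Transplanting this perturbation back into $Z$ (using that the $\ell_1$-norm on the high coordinate simply adds) yields two points of $U$ at mutual distance close to $2$, whence $\operatorname{diam}(U)=2$.

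For the second assertion, the natural strategy is to build, for a prescribed $\gamma>0$, a convex combination of slices of $B_Z$ of diameter at most $\gamma$ by concentrating the mass on a single summand $X_n$ whose $\varepsilon_n$ is small. Fix $n$ with $\varepsilon_n<\gamma/2$ and take a convex combination $\sum_{j=1}^m\lambda_j S(B_{X_n},f_j,\alpha_j)$ in $B_{X_n}$ of diameter at most $\varepsilon_n$. Each functional $f_j\in X_n^*$ induces a functional on $Z$ by composing with the canonical projection $\pi_n:Z\to X_n$, and I would verify that the corresponding slice $S(B_Z,f_j\circ\pi_n,\alpha_j)$ of $B_Z$ has, up to the slice parameter, its "mass'' forced onto the coordinate $X_n$: since $f_j\circ\pi_n$ attains its supremum over $B_Z$ on elements supported in $X_n$, a point of this slice must have its $X_n$-coordinate of norm close to $1$, and hence (by the $\ell_1$-normalisation) almost all of its total norm sitting in $X_n$. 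Averaging over $j$, any element of the convex combination of these slices is $\ell_1$-close to an element of $\sum_j\lambda_j S(B_{X_n},f_j,\alpha_j)$ lying in $B_{X_n}$, whose diameter is at most $\varepsilon_n$.

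Concretely, two points $z,w$ of the convex combination of the lifted slices decompose as averages of slice points whose non-$X_n$ coordinates carry total $\ell_1$-mass bounded by a quantity controlled by the slice parameters $\alpha_j$; shrinking the $\alpha_j$ makes this error as small as one wishes. Thus $\Vert z-w\Vert_1$ is bounded by the $X_n$-diameter $\varepsilon_n$ plus a controllable error term, giving diameter at most $\gamma$. Letting $\gamma\to 0$ yields $\inf\{\operatorname{diam}(T)\}=0$.

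The main obstacle I anticipate is the bookkeeping in the second part: one must quantify precisely how the slice parameters $\alpha_j$ force the $X_n$-coordinate of a slice point to have norm near $1$, and correspondingly bound the residual $\ell_1$-mass in the other coordinates, uniformly across the $m$ slices and under averaging with the weights $\lambda_j$. This is a routine but delicate estimate in the spirit of the proof of Proposition~\ref{malo}, and choosing the $\alpha_j$ small enough (in terms of $\gamma$ and $\varepsilon_n$) before lifting the slices is the step that must be arranged carefully. The first part, by contrast, reduces cleanly to the single-coordinate D2P once the tail of the $\ell_1$-sum is observed to be weakly small.
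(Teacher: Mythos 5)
Your treatment of the second assertion is essentially the paper's own argument and is sound: you lift each slice $S(B_{X_n},f_j,\alpha_j)$ to the slice of $B_Z$ determined by $(f_j,0)=f_j\circ\pi_n$ with a smaller width $\mu<\alpha_j$, note that any $(x,y)$ in the lifted slice has $\Vert x\Vert>1-\mu$ and hence $\Vert y\Vert<\mu$, so that the lifted slice sits inside $S(B_{X_n},f_j,\alpha_j)\times\mu B_{Y_n}$ with $Y_n=\ell_1-\bigoplus_{k\neq n}X_k$, and conclude that the convex combination of the lifted slices has diameter at most $\varepsilon_n+2\mu$. The ``delicate bookkeeping'' you anticipate is exactly this one inclusion; there is no obstacle there.

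The first part, however, has a genuine gap. If $U$ is a basic weak neighbourhood of $z_0=(x_n)_n\in B_Z$ and $m>N$ lies in the tail, then $\Vert x_m\Vert<\delta$, and a perturbation of the $m$-th coordinate alone must respect the $\ell_1$-constraint: replacing $x_m$ by $u$ requires $\Vert u\Vert\le 1-\Vert z_0\Vert+\Vert x_m\Vert$. Since every nonempty relatively weakly open subset of $B_Z$ meets $S_Z$, you must handle $\Vert z_0\Vert=1$, in which case $\Vert u\Vert\le\Vert x_m\Vert<\delta$ and two such perturbations lie at distance at most $2\Vert x_m\Vert+2\Vert u\Vert<4\delta$ --- nowhere near $2$. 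Put differently, the two points of $B_{X_m}$ at distance near $2$ supplied by the D2P of $X_m$ each have norm near $1$, and inserting either into the $m$-th slot of $z_0$ pushes the $\ell_1$-norm above $1$; scaling them down to fit (your ``relevant scaled point'') only yields distance about $2\Vert x_m\Vert$. So the D2P of $Z$ cannot be extracted from the D2P of a single far-out summand. The paper instead reduces $U$ to a nonempty finite intersection of slices, truncates the functionals to a finite block $Y=\ell_1-\bigoplus_{i=1}^{k}X_i$, invokes the nontrivial cited fact that a \emph{finite} $\ell_1$-sum of spaces with the D2P again has the D2P to produce $y_1,y_2$ in the truncated intersection with $\Vert y_1-y_2\Vert>2-\rho$, and then views $y_1,y_2$ inside $B_Z$. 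Some input of this kind --- a statement about finite $\ell_1$-sums rather than about one coordinate --- is unavoidable, and your sketch neither proves nor cites it.
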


\begin{proof}

In order to prove that $$\inf\{diam(T): T\ {\rm convex\
combination\ of\ slices\ in}\ B_Z\}=0,$$ fix $n\in \natu$ and let
us see that for every slice of $B_{X_n}$ we can define a slice of
$B_Z$ with similar diameter. Consider $Z=X_n\oplus_1 Y_n$, being
$Y_n=\ell_1-\bigoplus_{k\neq n} X_k$. Let $S_n= S(B_{X_n}, x_n^*,
\alpha )$ be a slice of $B_{X_n}$ and fix $ 0 < \mu < \alpha $. We
can assume that $x_n^* \in S_{X_n^*}$. If $(x_n,y_n) \in S( B_Z,
(x_n^*,0), \mu )$, then $x_n^* (x_n)
> 1 - \mu > 1 - \alpha $ and so $\Vert x_n \Vert > 1 -\mu$. Thus
$\Vert y_n \Vert < \mu $. As a consequence, $\Vert (x_n,y_n) -
(x_n,0) \Vert < \mu $. Then we have that
\begin{equation}
 S( B_Z, (x_n^*,0), \mu ) \subset S( B_{X_n}, x_n^*, \alpha )
\times \mu B_{Y_n} .
\end{equation}
Now, if $T_n$ is a convex combination   of slices of $B_{X_n}$,
for $\mu
> 0$ small enough we get that

$$ \inf \{ diam (T): T \ \text{is a convex combination of slices
of} \ B_Z \} \leq $$ $$ diam(T_n) + 2 \mu\leq \varepsilon_n+2\mu
.$$ We conclude that $$\inf\{diam(T): T\ {\rm convex\ combination\
of\ slices\ in}\ B_Z\}=0,$$ since $\lim_n\varepsilon_n =0$.

We pass now to prove that $Z$ has D2P. As every nonempty
relatively weakly open subset of $B_{Z}$ contains a nonempty
intersection of slices in $B_Z$ \cite[Lemme 5.3]{BB}, take
$f_1,\ldots ,f_N\in S_Z$, $0<\alpha_1 ,\ldots ,\alpha_N <1$ and
consider an nonempty intersections of slices in $B_Z$
$$S=\{z\in B_Z:f_i(z)>1-\alpha_i,\ 1\leq i\leq N\}.$$ Pick $z_0\in S_Z\cap S$, then
choose $0<\varepsilon<\alpha_i$ for every $i$ so that
$f_i(z_0)>1-\alpha_i+\varepsilon$ for every $i$.

We denotes by $P_n$ the projection of $Z$ onto $\ell_1
-\bigoplus_{i=1}^n X_i$, which is a norm one projection for every
$n\in \natu$. As $f_i(z_0)>1-\alpha_i+\varepsilon$, there is
$k\in\natu$ such that
$P_k^*(f_i)(P_k(z_0))>1-\alpha_i+\varepsilon$, where $P_k^*$
denotes the transposed  projection of $P_k.$

Consider the intersections of slices in the unit ball of $Y=\ell_1
-\bigoplus_{i=1}^k X_i$ given by $T=\{y\in
B_Y:P_k^*(f_i)(y)>1-\alpha_i+\varepsilon ,\  1\leq i\leq N \}.$
Observe that $T\neq \emptyset$, since $P_K(z_0)\in T$. In order to
prove that $diam(S)=2$, fix $\rho>0$ and take $y_1,y_2\in B_Y\cap
T$ such that $\Vert y_1 -y_2\Vert>2-\rho$. This is possible,
because it is known that the finite $\ell_1$-sum of Banach spaces
with D2P has too D2P \cite{AcBeLo}. Now we see $y_1, y_2$ as
elements in Z, via the natural isometric embedding of $Y$ into
$Z$, and we have that $y_1 ,y_2 \in S$ with $\Vert y_1 -y_2\Vert_Z
>2-\rho$, hence $diam(S)\geq 2-\rho$. As $\rho$ was arbitrary, we
conclude that $diam(S)=2$.\end{proof}

Finally, we would like to pose the following
questions:\begin{enumerate}\item We don't know if $L_1$ can be
equivalently renormed  satisfying D2P so that every convex
combination of slices of its unit ball has diameter arbitrarily
samll. \item What Banach spaces can be equivalently renormed to
satisfy slice-D2P, D2P or strong-D2P? \item Is there some strongly
regular Banach space with D2P?\end{enumerate}

About the third question, recall that a Banach space $X$ is said
to be strongly regular (SR) if every closed, convex and bounded
subset of $X$ has convex combination of slices with diameter
arbitrarily small (we refer to \cite{GGMS} for background about
this topic). It is well known that every Banach space containing
isomorphic copies of $c_0$ fails to be SR. As SR is an isomorphic
property, that is independent on the equivalent norm considered in
the space, every renorming of $c_0$ fails to be SR. Also it is
known that there are SR Banach spaces so that every relatively
weakly open subset of its unit ball has diameter, at least, some
$\delta>0$, but with $\delta<2$.

About the second question, it seems natural to think that every
Banach space failing to be strongly regular can be equivalently
renormed with the strong-D2P, but we don't know if this is true.
In \cite{BeLoZo1} it is proved that every Banach space $X$, whose
dual $X^*$ fails to be strongly regular can be equivalently
renormed so that every convex combination of $w^*$-slices in the
unit ball of $X^*$ has diameter 2. Moreover, if $X$ is separable,
also it is showed there that for every $\varepsilon>0$, $X$ can be
equivalently renormed so that every convex combination of slices
in the unit ball of $X^*$ has diameter, at least, $2-\varepsilon$.

\bigskip

\end{document}